\documentclass{amsart}

\newtheorem{theorem}{Theorem}[section]

\theoremstyle{definition}

\theoremstyle{remark}
\newtheorem{remark}[theorem]{Remark}

\numberwithin{equation}{section}

\newcommand{\hyper}[5]{\,{}_{#1}F_{#2}\left(
\begin{array}{r|l}
\begin{array}{cc}{\displaystyle{#3}}\\
{\displaystyle{#4}}
\end{array} & {\displaystyle{#5}}
\end{array} \right)}

\newcommand{\phin}[6]{\,{}S_{#1}^{*}\left(\!\!%
\begin{array}{c|c}
\begin{array}{cc}{\displaystyle{#2}} & {\displaystyle{#3}}\\[-0.1ex]
{\displaystyle{#4}} & {\displaystyle{#5}} \end{array} &
\,{\displaystyle{#6}}
\end{array} \right)}

\newcommand{\qphin}[6]{\,{}S_{#1}\left(\!\!%
\begin{array}{c|c}
\begin{array}{cc}{\displaystyle{#2}} & {\displaystyle{#3}}\\[-0.1ex]
{\displaystyle{#4}} & {\displaystyle{#5}} \end{array} & \, q;
\,{\displaystyle{#6}}
\end{array} \right)}

\newcommand{\qphinmonic}[6]{\,{}\bar{S}_{#1}\left(\!\!%
\begin{array}{c|c}
\begin{array}{cc}{\displaystyle{#2}} & {\displaystyle{#3}}\\[-0.1ex]
{\displaystyle{#4}} & {\displaystyle{#5}} \end{array} & \, q;
\,{\displaystyle{#6}}
\end{array} \right)}

\newcommand{\ww}[6]{\,{}W_{#1}\left(\!\!%
\begin{array}{c|c}
\begin{array}{cc}{\displaystyle{#2}} & {\displaystyle{#3}}\\[-0.1ex]
{\displaystyle{#4}} & {\displaystyle{#5}} \end{array} &
\,{\displaystyle{#6}}
\end{array} \right)}

\newcommand{\ssstar}[6]{\,{}S_{#1}^{*}\left(\!\!%
\begin{array}{c|c}
\begin{array}{cc}{\displaystyle{#2}} & {\displaystyle{#3}}\\[-0.1ex]
{\displaystyle{#4}} & {\displaystyle{#5}} \end{array} &
\,{\displaystyle{#6}}
\end{array} \right)}

\newcommand{\Z}{\mathbf Z}
\newcommand{\Zplus}{\Z_+}
\newcommand{\C}{\mathbf C}

\newcommand{\vectj}[2]{\mbox{$#1_1,#1_2,\ldots ,#1_{#2}$}}  

\newcommand{\qf}[3]{\text{$(#1;\,#2)_{#3}$}}
\newcommand{\qnum}[1]{\text{$[#1]_{q}$}}

\newcommand{\qbinom}[2]{\text{$\genfrac[]{0pt}{}{#1}{#2}_q$}}
\newcommand{\qqbinom}[2]{\text{$\genfrac[]{0pt}{}{#1}{#2}_{q^{2}}$}}

\newcommand{\qhyper}[6]{\,{}_{#1}\phi_{#2}\left(\!\!%
            \begin{array}{cc}{#3}\\[-0.1ex]{#4} \end{array}
            \Big|\,{#5};{#6}\right)}

\begin{document}

\title[A class of symmetric $q$-orthogonal polynomials]{A class of symmetric $q$-orthogonal polynomials with four free parameters}

\author[Area]{\sc I. Area}
\address[Area]{Departamento de Matem\'atica Aplicada II,
              E.E. de Telecomunicaci\'on,
              Universidade de Vigo,
              Campus Lagoas-Marcosende,
              36310 Vigo, Spain.}
\email[Area]{area@uvigo.es}
\thanks{The work of I. Area has been partially supported by the Ministerio de Ciencia e Innovaci\'on of Spain under grants MTM2009--14668--C02--01 and MTM2012--38794--C02--01, co-financed by the European Community fund FEDER.}

\author[Masjed-Jamei]{\sc M. Masjed-Jamei}
\address[Masjed-Jamei]{Department of Mathematics, K.N.Toosi University of Technology, P.O. Box 16315--1618, Tehran, Iran.}
\email[Masjed-Jamei]{mmjamei@kntu.ac.ir, mmjamei@yahoo.com}
\thanks{The work of M. Masjed-Jamei has been supported by a grant from``Iran National Science Foundation".}

\subjclass[2010]{Primary 34B24, 39A13, \ Secondary 33C47, 05E05}
\date{\today}

\begin{abstract}
By using a generalization of Sturm-Liouville problems in $q$-dif\-ference spaces, a class of symmetric $q$-orthogonal polynomials with four free parameters is introduced. The standard properties of these polynomials, such as a second order $q$-difference equation, the explicit form of the polynomials in terms of basic hypergeometric series, a three term recurrence relation and a general orthogonality relation are presented. Some particular examples are then studied in detail.
\end{abstract}


\maketitle

\section{Introduction}

A regular Sturm-Liouville problem of continuous type is a boundary value problem in the form
\begin{equation}\label{eq:1}
\frac{d}{dx} \left( k(x) \frac{dy_{n}(x)}{dx} \right) + \left(\lambda_{n} \varrho(x)-q(x) \right) y_{n}(x)=0 \qquad (k(x)>0, \varrho(x)>0),
\end{equation}
which is defined on an open interval, say $(a,b)$, and has the boundary conditions
\begin{equation}\label{eq:2}
\alpha_{1} y(a) + \beta_{1} y'(a)=0, \quad \alpha_{2} y(b) + \beta_{2} y'(b)=0,
\end{equation}
where $\alpha_{1}, \alpha_{2}$ and $\beta_{1}, \beta_{2}$, are given constants and $k(x)$, $k'(x)$, $q(x)$, and $\varrho(x)$ in (\ref{eq:1}) are to be assumed continuous for $x \in [a,b]$. In this sense, if one of the boundary points $a$ and $b$ is singular (i.e. $k(a) = 0$ or $k(b) = 0$), the problem is called a singular {S}turm-{L}iouville problem of continuous type.

Let $y_{n}$ and $y_{m}$ be two eigenfunctions of equation (\ref{eq:1}). According to {S}turm-{L}iouville theory \cite{MR922041}, they are orthogonal with respect to the weight function $\varrho(x)$ under the given conditions (\ref{eq:2}) so that we have
\begin{equation}\label{eq:3}
\int_{a}^{b} \varrho(x) y_{n}(x) y_{m}(x) dx =
\left( \int_{a}^{b} \varrho(x) y_{n}^{2}(x) dx \right) \delta_{n,m}=
\| y_{n} \|_{2}^{2} \begin{cases} 0 & n \neq m, \\ 1 & n=m. \end{cases}
\end{equation}

Many important special functions in theoretical and mathematical physics are solutions of a regular or singular {S}turm-{L}iouville problem that satisfy the orthogonality condition (\ref{eq:3}). For instance, the associated Legendre functions \cite{MR1810939}, Bessel functions \cite{MR922041}, Fourier trigonometric sequences \cite{MR0105586}, ultraspherical functions \cite{MR922041} and Hermite functions \cite{MR922041} are some specific continuous samples. Most of these functions are symmetric (i.e. $\phi_{n}(-x)=(-1)^{n} \phi_{n}(x)$) and have found valuable applications in physics and engineering. Hence, if we can somehow extend these examples symmetrically and preserve their orthogonality property, it seems that we will be able to find new applications, which logically extend the previous established applications. Recently in \cite{MR2374588}, this matter has been done for continuous variables and the classical equation (\ref{eq:1}) has been symmetrically extended in the following form
\begin{equation}\label{eq:4}
A(x) \phi_{n}''(x) + B(x) \phi_{n}'(x) + \left( \lambda_{n} C(x) + D(x) + \sigma_{n} E(x)\right) \phi_{n}(x)=0,
\end{equation}
where $A(x)$, $D(x)$, $E(x)$ and $(C(x)>0)$ are even functions, $B(x)$ is an odd function and 
\begin{equation}\label{eq:sigma}
\sigma_{n}=\frac{1-(-1)^{n}}{2} = \begin{cases}
0 & n \text{ even}, \\
1 & n \text{ odd}.
\end{cases} 
\end{equation}
It has been proved in \cite{MR2374588} that under some specific conditions, the symmetric solutions of equation (\ref{eq:4}) are orthogonal and preserve the orthogonality interval, in other words:
\begin{theorem}\cite{MR2374588}\label{th:1}
Let $\phi_{n}(-x)=(-1)^{n} \phi_{n}(x)$ be a sequence of symmetric functions that satisfies the differential equation (\ref{eq:4}), where $\{Ê\lambda_{n} \}_{n}$ is a sequence of constants. If $A(x)$, $(C(x) > 0)$, $D(x)$ and $E(x)$ are even real  functions and $B(x)$ is odd then
\begin{equation*}
\int_{-\nu}^{\nu} P^{*}(x) \phi_{n}(x) \phi_{m}(x) dx = \left( \int_{-\nu}^{\nu}P^{*}(x) \phi_{n}^{2}(x) dx \right) \delta_{n,m},
\end{equation*}
where
\begin{equation}\label{eq:6}
P^{*}(x)= C(x) {\exp} \left( \int \frac{B(x)-A'(x)}{A(x)} dx \right) = \frac{C(x)}{A(x)} {\exp} \left( \int \frac{B(x)}{A(x)} dx \right).
\end{equation}
Of course, the weight function defined in (\ref{eq:6}) must be positive and even on $[-\nu,\nu]$ and the function
\begin{equation*}
A(x)K(x)=A(x) {\exp} \left( \int \frac{B(x)-A'(x)}{A(x)} dx\right) = {\exp} \left( \int \frac{B(x)}{A(x)} dx\right),
\end{equation*}
must vanish at $x = \nu$ , i.e. $A(\nu) K(\nu) = 0$. In this way, since $K(x) = P^{*}(x) /C(x)$ is an even function so $A(-\nu) K(-\nu) = 0$ automatically.
\end{theorem}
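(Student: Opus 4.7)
The plan is to adapt the standard Sturm--Liouville symmetrization argument, carefully tracking the new term $\sigma_n E(x)$. First I would convert (\ref{eq:4}) into self-adjoint form by introducing the integrating factor $K(x)=\exp\!\left(\int\frac{B(x)-A'(x)}{A(x)}\,dx\right)$, so that $(A(x)K(x))'=B(x)K(x)$ and (\ref{eq:4}) becomes
\begin{equation*}
\bigl(A(x)K(x)\phi_n'(x)\bigr)'+K(x)\bigl(\lambda_n C(x)+D(x)+\sigma_n E(x)\bigr)\phi_n(x)=0.
\end{equation*}
Writing the same equation for $\phi_m$, multiplying the first by $\phi_m$ and the second by $\phi_n$, and subtracting converts the principal part into the exact derivative $\frac{d}{dx}\bigl[A(x)K(x)(\phi_m\phi_n'-\phi_n\phi_m')\bigr]$. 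Integrating from $-\nu$ to $\nu$ and using the hypothesis $A(\pm\nu)K(\pm\nu)=0$ then yields the Lagrange-type identity
\begin{equation*}
(\lambda_n-\lambda_m)\int_{-\nu}^{\nu}P^*(x)\phi_n(x)\phi_m(x)\,dx+(\sigma_n-\sigma_m)\int_{-\nu}^{\nu}E(x)K(x)\phi_n(x)\phi_m(x)\,dx=0,
\end{equation*}
after using $P^*=CK$.

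Next I would dispose of the $\sigma$-term by parity. Since $B$ is odd and $A$ is even, $(B-A')/A$ is odd, so its antiderivative is even and $K$ is itself even; hence both $P^*=CK$ and $EK$ are even. If $n$ and $m$ have opposite parity, then $\sigma_n-\sigma_m=\pm 1$ while $\phi_n\phi_m$ is odd, so both integrands above are odd and vanish on the symmetric interval $[-\nu,\nu]$; in particular the required orthogonality $\int_{-\nu}^{\nu}P^*\phi_n\phi_m\,dx=0$ holds trivially. If instead $n$ and $m$ have the same parity with $n\neq m$, then $\sigma_n=\sigma_m$ kills the $E$-integral entirely, and since $\lambda_n\neq\lambda_m$ one divides out $\lambda_n-\lambda_m$ to recover the orthogonality.

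The main obstacle is reconciling the new parity-selected perturbation $\sigma_n E(x)\phi_n(x)$ with the Sturm--Liouville machinery, since on its face this term would spoil orthogonality between eigenfunctions attached to distinct eigenvalues. The resolution is that $\sigma_n-\sigma_m$ is nonzero precisely when $n-m$ is odd, and in that same regime the parity hypotheses on $A,B,C,D,E$ and on the $\phi_n$ force the offending integrand to be odd. Thus the two mechanisms that could obstruct orthogonality --- a surviving boundary term and a surviving $E$-integral --- are eliminated respectively by the assumption $A(\nu)K(\nu)=0$ and by the symmetric interval of integration, and no ingredient beyond this parity bookkeeping is needed.
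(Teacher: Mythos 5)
Your argument is correct and is essentially the standard proof of this result: the paper itself quotes Theorem \ref{th:1} from the cited reference without proof, and the self-adjoint reduction via $K(x)$, the Lagrange identity with vanishing boundary term $A(\pm\nu)K(\pm\nu)(\phi_m\phi_n'-\phi_n\phi_m')$, and the parity dichotomy on $n-m$ (odd parity killing both integrals, even parity killing the $\sigma_n E$ term) is exactly the mechanism used there. The only point worth flagging is that in the same-parity case you need $\lambda_n\neq\lambda_m$ for $n\neq m$, a standing assumption that the theorem statement leaves implicit.
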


By using theorem \ref{th:1}, many symmetric special functions of continuous type have been generalized in \cite{MR2270049,MR2374588,MR2421844,MR2601301,MR2467682,MR2743534,1220.33011}. Recently in \cite{MASJEDAREA} we have generalized usual Sturm-Liouville problems with symmetric solutions in discrete spaces on the linear lattice $x(s)=s$, and introduced a basic class of symmetric orthogonal polynomials of a discrete variable with four free parameters \cite{MASJEDAREA2}.

Moreover, $q$-orthogonal functions can similarly be solutions of a regular or singular $q$-Sturm-Liouville problem in the form \cite{0817.39004}
\begin{equation}\label{eq:8}
D_{q} \left( K^{*}(x) D_{q^{-1}} y_{n}(x) \right) + \left( \lambda_{n} \varrho^{*}(x)-q^{*}(x) \right)y_{n}(x)=0 \qquad (K^{*}(x)>0, \quad \varrho^{*}(x)>0),
\end{equation}
where the $q$-difference operator $D_{q}$ is defined by
\begin{equation}\label{eq:qhahn}
D_{q} f(x)=\frac{f(qx)-f(x)}{(q-1)x}\qquad (x\neq 0),
\end{equation}
with $D_{q} f(0):=f'(0)$ (provided $f'(0)$ exists), and (\ref{eq:8}) satisfies a set of boundary conditions like (\ref{eq:2}). This means that if $y_{n}(x)$ and $y_{m}(x)$ are two eigenfunctions of $q$-difference equation (\ref{eq:8}), they are orthogonal with respect to the weight function $\varrho^{*}(x)$ on a discrete set \cite{MR1149380}.

Recently in \cite{AREAMASJED} we have presented the following theorem by which one can generalize usual Sturm-Liouville problems with symmetric solutions. As a very important consequence of this theorem, we can introduce a basic class of symmetric $q$-orthogonal polynomials with four free parameters.

\begin{theorem}
Let $\phi_{n}(x;q)=(-1)^{n} \phi_{n}(-x;q)$ be a sequence of symmetric functions that satisfies the $q$-difference equation 
\begin{equation}\label{eq:17}
A(x) D_{q} D_{q^{-1}} \phi_{n}(x;q) + B(x) D_{q} \phi_{n}(x;q) \\ 
+ \left( \lambda_{n,q} C(x) + D(x) + \sigma_{n} E(x) \right) \phi_{n}(x;q)=0,
\end{equation}
where $A(x)$, $B(x)$, $C(x)$, $D(x)$ and $E(x)$ are independent functions, $\sigma_{n}$ is defined in (\ref{eq:sigma})
and $\lambda_{n,q}$ is a sequence of constants. If $A(x)$, $(C(x)>0)$, $D(x)$ and $E(x)$ are even functions and $B(x)$ is odd, then
\begin{equation*}
\int_{-\alpha}^{\alpha} W^{*}(x;q) \phi_{n}(x;q) \phi_{m}(x;q) d_{q}x =
\left( \int_{-\alpha}^{\alpha} W^{*}(x;q) \phi_{n}^{2}(x;q) d_{q}x  \right) \delta_{n,m},
\end{equation*}
where 
\begin{equation}\label{eq:19}
W^{*}(x;q)=C(x)W(x;q),
\end{equation}
and $W(x;q)$ is solution of the Pearson $q$-difference equation
\begin{equation}\label{eq:20}
D_{q} \left(A(x) W(x;q) \right) = B(x) W(x;q),
\end{equation}
which is equivalent to
\begin{equation*}
\frac{W(qx;q)}{W(x;q)}=\frac{(q-1)x B(x)+A(x)}{A(qx)}.
\end{equation*}
Of course, the weight function defined in (\ref{eq:19}) must be positive and even and $A(x)W(x;q)$ must vanish at $x=\alpha$.
\end{theorem}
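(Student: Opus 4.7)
The strategy is to carry the classical Sturm-Liouville orthogonality argument into the $q$-calculus setting, using a preliminary parity reduction to dispose of the symmetry-breaking term $\sigma_{n}E(x)$. The Pearson equation (\ref{eq:20}) will play the role of integrating factor. Since $A$ and $C$ are even and $B$ is odd, and since $D_{q}$ sends even functions to odd ones, (\ref{eq:20}) is consistent with an even $W(x;q)$; the hypothesis that $W^{*}=CW$ is even then forces $W$ itself to be even. If $n+m$ is odd, $\phi_{n}\phi_{m}$ is an odd function of $x$, so $W^{*}\phi_{n}\phi_{m}$ is odd and its Jackson integral on the symmetric set $[-\alpha,\alpha]$ vanishes. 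Only the case $n+m$ even remains, and in that case $\sigma_{n}=\sigma_{m}$.

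In the remaining case I would first use the Pearson identity $D_{q}(AW)=BW$, the $q$-Leibniz rule $D_{q}(fg)(x)=f(x)D_{q}g(x)+g(qx)D_{q}f(x)$, and the elementary relation $D_{q^{-1}}\phi(qx)=D_{q}\phi(x)$ to verify
\begin{equation*}
W(x;q)\bigl[A(x)D_{q}D_{q^{-1}}\phi+B(x)D_{q}\phi\bigr]=D_{q}\bigl[A(x)W(x;q)D_{q^{-1}}\phi\bigr].
\end{equation*}
This casts (\ref{eq:17}) in self-adjoint form. Writing that form once for $\phi_{n}$ and once for $\phi_{m}$, multiplying by $\phi_{m}$ and $\phi_{n}$ respectively, and subtracting cancels the $D(x)$ and (since $\sigma_{n}=\sigma_{m}$) the $\sigma E(x)$ contributions. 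A second application of the $q$-Leibniz rule, in which the shifted cross-terms cancel antisymmetrically through $D_{q^{-1}}\phi(qx)=D_{q}\phi(x)$, collapses the remainder into the $q$-Lagrange identity
\begin{equation*}
D_{q}\bigl[A(x)W(x;q)\bigl(\phi_{m}D_{q^{-1}}\phi_{n}-\phi_{n}D_{q^{-1}}\phi_{m}\bigr)\bigr]=(\lambda_{m,q}-\lambda_{n,q})\,W^{*}(x;q)\,\phi_{n}(x;q)\phi_{m}(x;q).
\end{equation*}
Jackson-integrating from $-\alpha$ to $\alpha$ and applying the fundamental theorem of $q$-calculus reduces the left-hand side to boundary values of $A(x)W(x;q)(\phi_{m}D_{q^{-1}}\phi_{n}-\phi_{n}D_{q^{-1}}\phi_{m})$ at $\pm\alpha$. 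These vanish by the hypothesis $A(\alpha)W(\alpha;q)=0$ and the evenness of $AW$ at $-\alpha$. Division by $\lambda_{m,q}-\lambda_{n,q}\neq 0$ for $n\neq m$ completes the proof.

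The main obstacle is the derivation of the $q$-Lagrange identity. In the continuous setting it is a one-line consequence of the product rule and the Pearson relation; in the $q$-setting one must track the shifted arguments $x$ versus $qx$ produced by $D_{q}(fg)$ and verify that the cross-terms in $\phi_{m}D_{q}H_{n}-\phi_{n}D_{q}H_{m}$, with $H_{k}=AWD_{q^{-1}}\phi_{k}$, cancel antisymmetrically. The cancellation is orchestrated by $D_{q^{-1}}\phi(qx)=D_{q}\phi(x)$, and choosing the right pairing of $\phi$ with $D_{q^{-1}}\phi$ inside the bracket (rather than, say, with $D_{q}\phi$) is the most delicate bookkeeping step.
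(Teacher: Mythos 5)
The paper states this theorem without proof --- it is quoted from the submitted companion paper \cite{AREAMASJED} --- so there is no in-paper argument to compare against; your proposal is the standard symmetric Sturm--Liouville orthogonality argument transplanted to the $q$-setting, and it is correct. The two computations you flag as delicate do check out: using the Leibniz rule in the form $D_q(fg)(x)=f(x)D_qg(x)+g(qx)D_qf(x)$ together with $D_{q^{-1}}\phi(qx)=D_q\phi(x)$, the Pearson relation $D_q(AW)=BW$ gives the self-adjoint form $D_q\bigl[A W D_{q^{-1}}\phi\bigr]=W\bigl[A D_qD_{q^{-1}}\phi+BD_q\phi\bigr]$, and the pairing that shifts $H_k$ rather than $\phi_k$, i.e. $H_k(qx)=A(qx)W(qx)D_q\phi_k(x)$, makes the cross terms $H_n(qx)D_q\phi_m(x)-H_m(qx)D_q\phi_n(x)$ vanish identically, yielding the $q$-Lagrange identity. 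The only implicit hypothesis worth making explicit is that $\lambda_{n,q}\neq\lambda_{m,q}$ for $n\neq m$ of the same parity (the usual nondegeneracy assumption), together with $q$-regularity at zero of the integrand so that the fundamental theorem of $q$-calculus applies on $[-\alpha,\alpha]$.
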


\section{Basic definitions and notations}

The $q$-shifted factorial is defined by
\begin{equation*}
 \qf xqn=\prod_{j=0}^{n-1}(1-q^jx),\qquad n=0,1,\ldots,
\end{equation*}
the $q$-number by
\begin{equation}\label{eq:qnum}
\qnum{z}=\frac{q^{z}-1}{q-1}, \quad z \in {\mathbf{C}},
\end{equation}
and the {basic hypergeometric series} is defined by
\begin{equation*}
 \qhyper rs {\vectj ar}{\vectj bs}{q}{z}=
 \sum_{k=0}^{\infty}\frac{\qf{a_1}qk\ldots \qf{a_r}qk}
                                 {\qf qqk\qf{b_1}qk\ldots \qf{b_s}qk}
               \left((-1)^kq^{\binom{k}{2}}\right)^{1+s-r}z^k.
\end{equation*}
Here $r,\,s\in\Zplus$ and $\vectj ar$, $\vectj bs$, $z$ $\in \C$.
In order to have a well--defined series the condition $ \vectj
bs \neq q^{-k}$ ($k=0,1,\ldots $) is required.

Let $\mu \in {\mathbf{C}}$ be fixed. A set $A \subseteq {\mathbf{C}}$ is called a $\mu$-geometric set if for $x \in A$, $\mu x \in A$. Let $f$ be a function defined on a $q$-geometric set $A \subseteq {\mathbf{C}}$. The $q$-difference operator is defined by
\begin{equation*}
D_{q}f(x)=\frac{f(qx)-f(x)}{(q-1)x}, \qquad x \in A \setminus \{0\}.
\end{equation*}
If $0 \in A$, we say that $f$ has the $q$-derivative at zero if the limit
\begin{equation*}
\lim_{n \to \infty} \frac{f(x q^{n})-f(0)}{xq^{n}} \qquad (x \in A),
\end{equation*}
exists and does not depend on $x$. We then denote this limit by $D_{q} f(0)$.

We shall also need the $q$-integral (the inverse of the $q$-derivative operator) introduced by J. Thomae \cite{Thomae1969} and F.H. Jackson \cite{JACKSON1910} ---see also \cite{MR2128719,MR2191786,MR2656096}--- which is defined as
\begin{equation}\label{eq:11}
\int_{0}^{x} f(t) \text{d}_qt = x(1-q)\sum_{n=0}^{\infty} q^n f(q^n x)\,, \qquad (x \in A),
\end{equation}
provided that the series converges, and for the interval $[a,b]$ we have based on (\ref{eq:11}) that
\begin{equation}\label{eq:10}
\int_{a}^{b} f(t) d_{q}t = \int_{0}^{b} f(t) d_{q}t - \int_{0}^{a} f(t) d_qt, \qquad (a,b \in A).
\end{equation}
Relations (\ref{eq:11}) and (\ref{eq:10}) directly yield
\begin{equation}\label{eq:12}
\int_{-b}^{b} f(t) d_{q}t = b(1-q) \sum_{n=0}^{\infty} q^{n} \left( f(bq^{n})+f(-b q^{n}) \right), \qquad (b \in A).
\end{equation}
This means that if $f$ is an odd function, then  $\displaystyle{\int_{-b}^{b} f(t) d_{q}t=0}$. Moreover, if $b \to \infty$, (\ref{eq:12}) changes to
\begin{equation*}
\int_{-\infty}^{\infty} f(t) d_{q}t =(1-q) \sum_{n=-\infty}^{\infty} q^{n} \left( f(q^{n}) + f(-q^{n})\right).
\end{equation*}

A function $f$ which is defined on a $q$-geometric set $A$ with $0 \in A$ is said to be $q$-regular at zero if $\displaystyle{\lim_{n \to \infty} f(xq^{n})=f(0)}$ for every $x \in A$. The rule of $q$-integration by parts is denoted by
\begin{equation}\label{eq:14}
\int_{0}^{a} g(x) D_{q}f(x) d_{q}x=(fg)(a)-\lim_{n \to \infty} (fg)(aq^{n}) - \int_{0}^{a} D_{q} g(x)f(qx)d_{q}x.
\end{equation}
If $f,g$ are $q$-regular at zero, the $\displaystyle{\lim_{n \to \infty} (fg)(aq^{n})}$ on the right-hand side of (\ref{eq:14}) can be replaced by $(fg)(0)$.

For $0 < R \leq \infty$ let $\Omega_{R}$ denote the disc $\{ z \in {\mathbf{C}} \, : \, \vert z \vert < R\}$. The $q$-analogue of the fundamental theorem says:
Let $f: {\Omega_{R}} \to {\mathbf{C}}$ be $q$-regular at zero and $\theta \in \Omega_{R}$ be fixed. Define
\begin{equation*}
F(x)=\int_{\theta}^{x} f(t) d_{q}t \qquad (x \in \Omega_{R}).
\end{equation*}
Then, the function $F$ is $q$-regular at zero, $D_{q}F(x)$ exists for any $x \in \Omega_{R}$ and $D_{q}F(x)=f(x)$. Conversely, if $a,b \in \Omega_{R}$ then
\begin{equation*}
\int_{a}^{b} D_{q}f(t) d_{q}t=f(b)-f(a).
\end{equation*}

The function $f$ is $q$-integrable on $\Omega_{R}$ if $\displaystyle{ \vert f(t) \vert d_{q}t}$ exists for all $x \in \Omega_{R}$.

\section{A class of symmetric $q$-orthogonal polynomials}

As a special case of equation (\ref{eq:4}), the following differential equation is defined in \cite{MR2270049}:
\begin{equation}\label{eq:240}
x^{2}(ax^{2}+b) \Phi_{n}''(x) + x(cx^{2}+d) \Phi_{n}'(x)-(n(c+(n-1)a)x^{2}+\sigma_{n} d) \Phi_{n}(x)=0.
\end{equation}
This equation has a symmetric polynomial solution as
\begin{multline}\label{eq:solcont}
\ssstar{n}{{c}}{{d} }{{a}}{{b}}{t}\\=\sum_{k=0}^{[n/2]} \binom{[n/2]}{k} \left( \prod_{i=0}^{[n/2]-(k+1)} \frac{(2i+(-1)^{n+1}+2[n/2]){a}+{c}}{(2i+(-1)^{n+1}+2){b}+{d} } \right) x^{n-2k} .
\end{multline}
If $a,b \neq0$, (\ref{eq:solcont}) can be written in terms of a $_{2}F_{1}$ hypergeometric series as
\begin{multline*}
\ssstar{n}{{c}}{{d} }{{a}}{{b}}{x} \\=\left(\frac{{a} }{{b}}\right)^{ \left[{n}/{2}\right]}
\frac{ \Gamma \left(\frac{1}{2} \left(1+\frac{{d}}{{b}} \right)+\sigma_{n}\right)  \Gamma
   \left(\frac{1}{2} \left(\frac{{c} }{{a} }-1\right)+2
    \left[{n}/{2}\right]+\sigma_{n}\right)}
   {\Gamma \left(\frac{1}{2} \left(\frac{{c}}{{a}}-1\right)+ \left[{n}/{2}\right]+\sigma_{n}\right) 
   \Gamma \left(\frac{1}{2} \left(1+\frac{{d}}{{b}} \right)+ \left[{n}/{2}\right]+\sigma_{n}\right)} \\
   \times \,\,x^{n}\,\, 
   \hyper{2}{1}
   {- \left[{n}/{2}\right],\frac{-2 \left( \left[{n}/{2}\right]+\sigma_{n}\right) {b} +{b} -{d} }{2 {b} }}
   {-\frac{{c} }{2{a} }-n+\frac{3}{2}}{-\frac{{b} }{{a} x^2}}.
\end{multline*}

The weight function corresponding to the polynomials (\ref{eq:solcont}) is in the form
\begin{multline}
\ww{}{c}{d}{a}{b}{x}=x^{2} \text{exp} \left( \int \frac{(c-4a)x^{2}+(d-2b)}{x(ax^2+b)} dx \right) \\
= \text{exp} \left( \int \frac{(c-2a)x^2+d}{x(ax^2+b)}dx \right).
\end{multline}
which satisfies the equation \cite{MR2270049}
\begin{equation}
\frac{d}{dx} (x^2(ax^2+b)W(x))=x((c+2a)x^2+(d+2b))W(x),
\end{equation}
For instance we have
\begin{align*}
&K_{1} \ww{}{-2a-2b-2}{2a}{-1}{1}{x}=\frac{\Gamma(a+b+3/2)}{\Gamma(a+1/2)\Gamma(b+1)} x^{2a}(1-x^{2})^{b}, \\
&\text{\hspace*{5cm}} \quad -1 \leq x \leq 1; \quad a+1/2>0, \quad b+1>0,
\end{align*}
and
\begin{equation*}
K_{2} \ww{}{-2}{2a}{0}{1}{x}=\frac{1}{\Gamma(a+1/2)} x^{2a}e^{-x^{2}}, 
\quad  x \in (-\infty, \infty); \quad a+1/2>0.
\end{equation*}
The values $K_{1}$ and $K_{2}$ play the normalizing constant role in the above distributions.
By referring to theorem \ref{th:1}, we observe in (\ref{eq:240}) that $A(x)=x^{2}({a} x^{2}+{b})$ is a polynomial of degree at most four, $B(x)=x({c} x^{2}+{d} )$ is an odd polynomial of degree at most three, $C(x)=x^{2}$ is a symmetric quadratic polynomial, $D(x) = 0$, and $E(x) =- {d}$ is constant. 

Motivated by these options, in this paper we similarly consider a $q$-difference equation type of (\ref{eq:17}) as 
\begin{equation}\label{eq:qde}
x^2 \left({a}  x^2+{b} \right) D_{q} D_{q^{-1}} \phi_{n}(x;q) + x \left({c}  x^2+{d} \right) D_{q} \phi_{n}(x;q) \\ 
+ \left( \lambda_{n,q} x^{2} - \sigma_{n} d \right) \phi_{n}(x;q)=0.
\end{equation}

To find a symmetric $q$-orthogonal polynomial solution of (\ref{eq:qde}), let
\begin{equation}\label{eq:29}
\bar{\phi}_{n}(x;q)=x^{n}+\delta_{n,q} x^{n-2} + \cdots,
\end{equation}
satisfy a three term recurrence relation as
\begin{equation}\label{eq:30}
\bar{\phi}_{n+1}(x;q)=x \bar{\phi}_{n}(x;q)-C_{n,q} \bar{\phi}_{n-1}(x;q), \quad ({\rm{with }}\,\,\, \bar{\phi}_{0}(x;q)=1, \,\,\, \bar{\phi}_{1}(x;q)=x).
\end{equation}

From (\ref{eq:qde}) and (\ref{eq:29}), equating the coefficient in $x^{n+2}$ gives
\begin{equation}\label{eq:lambdan}
\lambda_{n,q}=-\qnum{n} \left(c-\qnum{1-n}a \right),
\end{equation}
provided that $\vert {a} \vert + \vert {c} \vert \neq 0$.

By using the eigenvalue $\lambda_{n,q}$ given in (\ref{eq:lambdan}) and equating the coefficient in $x^{n}$ we obtain in (\ref{eq:29}) that
\begin{equation*}
\delta_{n,q}=\frac{q^2 \left(-{b}  (q-1) q \qnum{n-1}\qnum{n}-{d}  q^{2 n}+{d}  q^n (q \sigma_{n}
+\sigma_{n-1})\right)}{(q+1) \left({a}  q^3-q^{2 n} ({a} +{c}  (q-1))\right)}.
\end{equation*}

Also from (\ref{eq:29}) and (\ref{eq:30}) we have
\begin{multline*}
x^{n+1} +  \delta_{n+1,q} x^{n-1} + \cdots \\= x \left( x^{n} +  \delta_{n,q} x^{n-2} + \cdots  \right) - {C}_{n,q} \left( x^{n-1} +  \delta_{n-1,q} x^{n-3}  + \cdots \right),
\end{multline*}
which implies
\begin{multline}\label{eq:27new}
C_{n,q}=\delta_{n,q}-\delta_{n+1,q}=\frac{1}{{a} ^2 q^4+q^{4 n} ({a} +{c}  (q-1))^2-{a} 
   \left(q^3+q\right) q^{2 n} ({a} +{c}  (q-1))} \\ \times \left( q^{n+1} \left(q^{2 n} ({a} +{c}  (q-1)) (({d} -{d}  q)
   \sigma_{n}-{b} ) \right. \right. \\ \left. \left. +q^n \left({a}  \left({b} 
   \left(q^2+1\right)+{d}  (q-1) q^2\right)+{b}  {c} 
   (q-1)\right)-{a}  q^2 ({b} +{d}  (q-1) \sigma_{n-1})\right) \right).
\end{multline}

\begin{remark}
The limit case of (\ref{eq:27new}) is as
\begin{equation}
\lim_{q \uparrow 1} C_{n,q}=\frac{n ({a}  ({b}  (2-n) +{d} )-{b}  {c}
   )-{d}  \sigma_{n} (2 {a}  (n-1)+{c} )}{({a}  (2 n-3)+{c} ) ({a}  (2 n-1)+{c} )},
\end{equation}
and for the eigenvalue (\ref{eq:lambdan}) we have
\begin{equation}
\lim_{q \uparrow 1} \lambda_{n,q}=-n (c-(1-n)a),
\end{equation}
which are exactly the same as in the continuous case \cite{MR2421844} by taking into account that the three-term recurrence relation (\ref{eq:30}) has a minus sign in the coefficients $C_{n,q}$.
\end{remark}

Since the polynomial solution of equation (\ref{eq:qde}) is symmetric, we use the notation
\begin{equation*}
\phi_{n}(x;q)=\qphin{n}{c}{d}{a}{b}{x},
\end{equation*}
for mathematical formulae and $S_{n}(a,b,c,d,x;q)$ into the text. This means that from now we deal with just one characteristic vector  $\vec{V}=(a,b,c,d)$ for any given sub-case.

For $n=2m$ and $n=2m+1$, $C_{n,q}$ in (\ref{eq:27new}) are simplified as
\begin{equation*}
C_{2m,q}  = -\frac{q^{2 m+1} \qnum{2m}(q-1) \left({b}  q^{2 m}
   ({a} +{c}  (q-1))-{a}  q^2 ({b} +{d} 
   (q-1))\right)}{{a} ^2 q^4+q^{8 m} ({a} +{c} 
   (q-1))^2-{a}  \left(q^3+q\right) q^{4 m} ({a} +{c} 
   (q-1))}\,,
\end{equation*}
and
\begin{equation*}
C_{2m + 1,q}  = -\frac{q^{2 m} \left(q^{2 m} ({a} +{c}  (q-1))-{a} 
   q\right) \left(q^{2 m+1} ({b} +{d}  (q-1))-{b}
   \right)}{{a} ^2 q+q^{8 m+1} ({a} +{c} 
   (q-1))^2-{a}  \left(q^2+1\right) q^{4 m} ({a} +{c} 
   (q-1))}\,.
\end{equation*}

\begin{remark}\label{remark:positivity}
Once we have explicitly determined $C_{n,q}$ in the recurrence relation (\ref{eq:30}), a discussion about the situation of this coefficient is extremely important. For instance, analyzing the location of the zeros of orthogonal polynomials would give rise to a positive definite case when $C_{n,q}  > 0\,\,\,(\forall n \in {\mathbf{N}})$, the quasi-definite case when $C_{n,q} \neq 0$, and weak orthogonality case when $C_{n,q}=0$ for some values of $n$.
However, this discussion completely depends on the four parameters ${a},{b},{c}$ and ${d}$, because $C_{n,q}$ is in general a rational expression in $q^{n}$.
\end{remark}

\begin{theorem}\label{theorem:explicit}
The explicit form of the polynomial $S_{m}(a,b,c,d;x;q)$ is as
\begin{multline}\label{eq:exp}
\qphin{m}{c}{d}{a}{b}{x} \\ 
=\sum _{k=0}^{ \left[\frac{m}{2}\right]}
   q^{(k-1) k} x^{m-2 k}
   \qqbinom{ \left[\frac{m}{2}\right]}{k}
\prod_{j=0}^{\left[\frac{m}{2}\right]-k-1} 
\frac{a \qnum{2 j+\sigma_{m}+m-1}+c q^{2 j+\sigma_{m}+m-1}}
{b \qnum{\left(2 j+(-1)^{m+1}+2\right)}+d q^{2 j+(-1)^{m+1}+2}},
\end{multline}
where the $q$-number $\qnum{x}$ has been defined in (\ref{eq:qnum}) and the $q$-binomial coefficient is defined by
\[
\qbinom{n}{m}=\frac{\qf{q}{q}{n}}{\qf{q}{q}{m} \qf{q}{q}{n-m}}.
\]

Moreover, if $a,b \neq 0$, then
\begin{equation}\label{eq:explicit2m}
\qphin{m}{c}{d}{a}{b}{x} = x^{\sigma_{m}} \qhyper{2}{1}
{q^{-m+\sigma_{m}},\frac{({a} +{c}  (q-1)) q^{m+\sigma_{m}-1}}{{a} }}
{\frac{({b}   +{d}  (q-1)) q^{2 \sigma_{m}+1}}{{b} }}
{q^{2}}
{-\frac{{a} q^2 x^2}{{b} }}.
\end{equation}
\end{theorem}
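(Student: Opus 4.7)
The plan is to determine the coefficients of
\[
\bar\phi_m(x;q)=\sum_{k=0}^{[m/2]}a_k[m]\,x^{m-2k},\qquad a_0[m]=1,
\]
from the $q$-difference equation \eqref{eq:qde}, and then to recast the resulting explicit expression in the two forms claimed. The operators act on monomials by $D_qx^r=\qnum{r}\,x^{r-1}$ and $D_{q^{-1}}x^r=q^{1-r}\qnum{r}\,x^{r-1}$, whence $D_qD_{q^{-1}}x^r=q^{1-r}\qnum{r}\qnum{r-1}x^{r-2}$.

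Substituting the ansatz into \eqref{eq:qde} and collecting the coefficient of $x^{m-2k+2}$ produces a two-term recurrence
\[
a_k[m]\,P_{m,k}+a_{k-1}[m]\,Q_{m,k}=0,
\]
where $P_{m,k}$ gathers the contributions of the $ax^4$ piece of $A(x)$, the $cx^2$ piece of $B(x)$, and $\lambda_{m,q}$, while $Q_{m,k}$ gathers those of $b$ in $A(x)$, $d$ in $B(x)$, and $-\sigma_m d$. Substituting the value of $\lambda_{m,q}$ from \eqref{eq:lambdan}, $P_{m,k}$ factors as a rational multiple of $a\qnum{2j+\sigma_m+m-1}+c\,q^{2j+\sigma_m+m-1}$ at the appropriate index $j$, and $Q_{m,k}$ factors analogously in terms of $b\qnum{2j+(-1)^{m+1}+2}+d\,q^{2j+(-1)^{m+1}+2}$. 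Iterating from $a_0[m]=1$ telescopes these ratios into a product over $j=0,\dots,[m/2]-k-1$; the leftover $q$-numbers combine into the $q^2$-binomial $\qqbinom{[m/2]}{k}$ after splitting according to the parity encoded by $\sigma_m$, and the residual powers of $q$ accumulate to $q^{(k-1)k}$. Renormalising so that the lowest-order nonzero coefficient equals $1$ (the convention distinguishing $S_m$ from its monic form $\bar S_m$) yields \eqref{eq:exp}.

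For the hypergeometric representation, assume $a,b\neq 0$ and apply the identity
\[
a\qnum{r}+c\,q^r=\frac{a}{1-q}\Bigl(1-\tfrac{a+(q-1)c}{a}\,q^r\Bigr),
\]
together with its $(b,d)$ analogue, to convert every factor in \eqref{eq:exp} into the shape $1-\alpha q^r$ characteristic of a $q^2$-shifted factorial. Reindexing the product in steps of two, pulling out the overall $x^{\sigma_m}$, and absorbing $\qqbinom{[m/2]}{k}$ together with $q^{(k-1)k}$ into the $\qf{q^2}{q^2}{k}$ denominator of the standard $_2\phi_1$, the $k$-th term becomes exactly
\[
\frac{\qf{q^{-m+\sigma_m}}{q^2}{k}\,\qf{(a+(q-1)c)q^{m+\sigma_m-1}/a}{q^2}{k}}{\qf{q^2}{q^2}{k}\,\qf{(b+(q-1)d)q^{2\sigma_m+1}/b}{q^2}{k}}\Bigl(-\tfrac{a q^2}{b}\Bigr)^{\!k}x^{-2k},
\]
which is the $k$-th term of the series in \eqref{eq:explicit2m}. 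Note that since $r-s=1$ the usual factor $(-1)^kq^{\binom{k}{2}}$ in the definition of $_{r}\phi_{s}$ disappears, matching the absence of any $\binom{k}{2}$ exponent in the proposed right-hand side.

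The main obstacle is the $q$-power bookkeeping: tracing every factor of $q$ back to its origin (from $D_qD_{q^{-1}}$, from $\lambda_{m,q}$, from the $q$-number shifts, and from the factorisations above) and verifying that the composite exponents agree with the prescribed numerator parameter $q^{m+\sigma_m-1}$ and denominator parameter $q^{2\sigma_m+1}$, uniformly in both parities of $m$. Once this is carried through for $m$ even and $m$ odd separately, \eqref{eq:exp} and, under $a,b\neq 0$, \eqref{eq:explicit2m} follow.
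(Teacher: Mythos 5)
Your overall strategy --- substituting the ansatz $\sum_k a_k[m]\,x^{m-2k}$ into \eqref{eq:qde}, extracting a two-term recurrence for the coefficients, telescoping it into a product, and then rewriting the factors as $q^{2}$-shifted factorials --- is precisely the method the paper itself invokes: its entire proof is a one-line citation of the series computation for classical $q$-orthogonal polynomials in \cite[Section 10.2]{MR2656096}. So in approach you and the paper coincide, and your set-up is sound: the operator actions $D_qx^r=\qnum{r}x^{r-1}$, $D_qD_{q^{-1}}x^r=q^{1-r}\qnum{r}\qnum{r-1}x^{r-2}$ are correct, and the coefficient of $x^{m-2k+2}$ does split into an $(a,c,\lambda_{m,q})$-part multiplying $a_k[m]$ and a $(b,d,\sigma_m d)$-part multiplying $a_{k-1}[m]$, with \eqref{eq:lambdan} forcing the first to factor through $\qnum{2k}$.

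Two details of your conversion step would not survive being carried out literally. First, the sum in \eqref{eq:exp} runs over \emph{descending} powers $x^{m-2k}$ with a product of $[m/2]-k$ factors, whereas the $_2\phi_1$ in \eqref{eq:explicit2m} runs over ascending powers $x^{\sigma_m+2k}$ with $q^2$-Pochhammers of length $k$; matching them requires reversing the summation index, $k\mapsto [m/2]-k$, via the standard reversal identity for $\qf{a}{q^2}{n-k}$. Your assertion that ``the $k$-th term becomes exactly'' the $k$-th term of the series (and the stray $x^{-2k}$ in your displayed term, where the series term carries $x^{+2k}$) skips this reversal. Second, ``exactly'' is not attainable: the two right-hand sides of the theorem are proportional but not equal. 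Both are polynomial eigenfunctions of the same equation, hence scalar multiples of one another, and comparing lowest-order coefficients --- the $k=[m/2]$ term of \eqref{eq:exp} is $q^{([m/2]-1)[m/2]}x^{\sigma_m}$, while the $k=0$ term of \eqref{eq:explicit2m} is $x^{\sigma_m}$ --- shows they differ by the constant $q^{[m/2]([m/2]-1)}$ (for $m=4$ one checks directly that every coefficient picks up the same factor $q^{2}$). This also means your renormalization prescription (``lowest-order nonzero coefficient equals $1$'') describes \eqref{eq:explicit2m} rather than \eqref{eq:exp}. Since you explicitly identify the $q$-power bookkeeping as the crux and then defer it, you should be aware that an honest completion of that bookkeeping ends with ``equal up to the constant $q^{[m/2]([m/2]-1)}$,'' not with a term-by-term identity.
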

\begin{proof}
Despite the degrees of $A(x)$, $B(x)$, $C(x)$ and $E(x)$ the proof can be done in a similar way as in \cite[Section 10.2]{MR2656096} for classical $q$-orthogonal polynomials.
\end{proof}

\begin{remark}
It is easy to check that
\[
\lim_{q \uparrow 1} \qphin{m}{c}{d}{a}{b}{x}=\phin{m}{c}{d}{a}{b}{x},
\]
where the right-hand side polynomial has been introduced in (\ref{eq:solcont}).
\end{remark}

\begin{remark}
The monic form of the polynomials (\ref{eq:explicit2m}) is represented as
\begin{multline*}
\qphinmonic{m}{c}{d}{a}{b}{x}\\=q^{\sigma_{m}-m}  \left( -\frac{b}{a} \right)^{ \left[\frac{m}{2}\right]}
\frac{
\qf{q^2;q^{-m \sigma_{m}};\frac{(b +d  (q-1)) q^{2 \sigma_{m}+1}}{b}}{q^{2}}{\left[\frac{m}{2}\right]}
}
{
\qf{q^{-m};q^{-(m-1) \sigma_{m}};\frac{(a +c  (q-1)) q^{m+\sigma_{m}-1}}{a}}{q^{2}}{\left[\frac{m}{2}\right]}
} \\
\times x^{\sigma_{m}} \qhyper{2}{1}
{q^{-m+\sigma_{m}},\frac{({a} +{c}  (q-1)) q^{m+\sigma_{m}-1}}{{a} }}
{\frac{({b}   +{d}  (q-1)) q^{2 \sigma_{m}+1}}{{b} }}
{q^{2}}
{-\frac{{a} q^2 x^2}{{b} }}.
\end{multline*}
\end{remark}

By noting theorem 2, since
\begin{equation}\label{eq:qpearson2}
\frac{W(qx;q)}{W(x;q)}=\frac{A(x)+(q-1) x B(x)}{A(qx)}
=\frac{x^2 ({a} +{c}  (q-1))+{b} +{d}  (q-1)}{q^{2}\left({a} q^2 x^2+{b} \right)},
\end{equation}
if $a,b \neq 0$ then the solution of equation (\ref{eq:qpearson2}) would be
\[
W(x;q)=\left(\frac{d (q-1)}{b}+1\right)^{\frac{\log x^2}{2 \log q}} 
\frac{
\qf{-\frac{a q^2 x^2}{b}}{q^2}{\infty}}
{x^{2}\, \qf{-\frac{(a+c (q-1)) x^2}{b+d (q-1)}}{q^2}{\infty}}=W(-x;q),
\]
in which some restrictions on the parameters must be considered in order to have convergence for the infinity products.

To compute the norm square value of the symmetric polynomials (\ref{eq:exp}), we can use Favard's theorem \cite{MR0481884}, which says if $\{P_{n}(x;q)\}$ satisfies the recurrence relation
\begin{equation*}
x P_{n}(x;q) = A_{n} P_{n+1}(x;q) + B_{n} P_{n}(x;q) + C_{n} P_{n-1}(x;q), \qquad n=0,1,2,\dots,
\end{equation*}
where $P_{-1}(x;q)=0$, $P_{0}(x;q)=1$, $A_{n}$, $B_{n}$, $C_{n}$ real and $A_{n} C_{n+1}>0$ for $n=0,1,2,\dots$, then there exists a weight function $W^{*}(x;q)$ so that
\begin{equation*}
\int_{-\alpha}^{\alpha} W^{*}(x;q) P_{n}(x;q) P_{m}(x;q) d_{q}x=
\left( \prod_{i=0}^{n-1} \frac{C_{i+1}}{A_{i}} \int_{-\alpha}^{\alpha} W^{*}(x;q) d_{q}x \right) \delta_{n,m}.
\end{equation*}
It is clear that this theorem also holds for the monic type of symmetric $q$-polynomials in which $A_{n}=1$ and $B_{n}=0$. 

We are now in conditions to analyze some particular cases of the $q$-difference equation (\ref{eq:qde}), which shall provide $q$-analogues of different families of orthogonal polynomials. 

\subsection{A generalization of $q$-ultraspherical polynomials} Let us consider the following $q$-difference equation
\begin{multline}\label{eq:gup}
{x^2 \left(1-x^2\right)} D_{q}D_{q^{-1}} \phi_{n}(x;q)
+(q+1) q x \left(\alpha-x^2 (\alpha+\beta+1)\right) D_{q} \phi_{n}(x;q) \\+
\left( -\qnum{n} (-(1+\alpha+\beta)q(1+q)+\qnum{1-n}) x^2 -\alpha q (1+q)\sigma_{n} \right) \phi_{n}(x;q)=0,
\end{multline}
as a special case of (\ref{eq:qde}) with the polynomial solution
\begin{multline}\label{eq:37}
\phi_{n}(x;\alpha,\beta \vert q)=\qphin{m}{-q (q+1) (\alpha+\beta+1)}{\alpha q (q+1)}{-1}{1}{x} \\ =x^{\sigma_{n}}
\qhyper{2}{1}
{q^{\sigma_{n}-n},q^{n+\sigma_{n}-1} \left((\alpha+\beta+1) q \left(q^2-1\right)+1\right)}
{q^{2\sigma_{n}+1} \left(\alpha q \left(q^2-1\right)+1\right)}{q^2}{q^2 x^2} .
\end{multline}
The polynomial sequence (\ref{eq:37}) satisfies an orthogonality relation as
\begin{multline*}
\int_{-1}^{1} W_{1}^{*}(x;\alpha,\beta \vert q) \phi_{n}(x;\alpha,\beta \vert q) \phi_{m}(x;\alpha,\beta \vert q) d_{q}x\\=
\left(\int_{-1}^{1} W_{1}^{*}(x;\alpha,\beta \vert q) \phi_{n}^{2}(x;\alpha,\beta \vert q) d_{q}x\right) \delta_{n,m},
\end{multline*}
in which $W_{1}^{*}(x;\alpha,\beta \vert q)=x^{2}\,W_{1}(x;\alpha,\beta \vert q)$ is the main weight function and the function $W_{1}(x;\alpha,\beta \vert q)$ satisfies the equation
\begin{equation}\label{eq:qde1}
\frac{W_{1}(qx;\alpha,\beta \vert q)}{W_{1}(x;\alpha,\beta \vert q)}=\frac{q \left(q^2-1\right) \left(x^2
   (\alpha+\beta+1)-\alpha\right)+x^2-1}{q^2 \left(q^2 x^2-1\right)}.
\end{equation}

Up to a periodic function, a solution of the equation (\ref{eq:qde1}) is in the form
\[
W_{1}(x;\alpha,\beta \vert q)=B^{\frac{\log\left(x^2\right)}{\log (q)}} \frac{\left(q^2 x^2;q^2\right){}_{\infty } }{x^{2} \, \left(-\frac{A^2
   x^2}{B^2};q^2\right){}_{\infty }}=W_{1}(-x;\alpha,\beta \vert q),
\]
where
\[
A=\sqrt{\left(q-q^3\right) (\alpha+\beta+1)-1}, \qquad B=\sqrt{\alpha \left(q^3-q\right)+1}.
\]
Notice that
\[
\lim_{q \uparrow 1} W_{1}^{*}(x;\alpha,\beta \vert q)=\lim_{q \uparrow 1} x^{2} W_{1}(x;\alpha,\beta \vert q)=x^{2\alpha}(1-x^{2})^{\beta},
\]
which gives us the orthogonality weight function of generalized ultra spherical polynomials, and might be compared with \cite[Eq. (24)]{MR2421844}.

The monic type of polynomials (\ref{eq:37}) satisfies a three term recurrence relation of type (\ref{eq:30}) with
\[
C_{2m,q}=\frac{q^{2 m+2} \left(q^{2 m}-1\right) \left(q^{2 m} \left(q
   \left(q^2-1\right) \vartheta+1\right)+q^2 \left(\alpha
   \left(q-q^3\right)-1\right)\right)}{-\left(q^2+1\right) q^{4
   m+2} \left(q \left(q^2-1\right) \vartheta+1\right)+q^{8 m+1}
   \left(q \left(q^2-1\right) \vartheta+1\right)^2+q^5},
\]
and
\begin{multline*}
C_{2m+1,q}\\=\frac{q^{2 m} \left(q^{2 m} \left(-\left(\alpha  q^5+(\beta +1)
   q^3+q^2-q {\vartheta}+1\right)\right)+\left(\alpha  q
   \left(q^2-1\right)+1\right) q^{4 m+1} \left(q
   \left(q^2-1\right)
   {\vartheta}+1\right)+q\right)}{-\left(q^2+1\right) q^{4
   m} \left(q \left(q^2-1\right) {\vartheta}+1\right)+q^{8
   m+1} \left(q \left(q^2-1\right)
   {\vartheta}+1\right)^2+q},
\end{multline*}
where $\vartheta=\alpha+\beta+1$.
Notice in this case that
\[
\lim_{q \uparrow 1} C_{n,q} = \frac{n^2-2 \left(\alpha (-1)^n (\alpha+\beta+n)-(\alpha+\beta) (\alpha+n)\right)}{(2 \alpha+2 \beta+2 n-1) (2 \alpha+2 \beta+2 n+1)},
\]
which coincides with \cite[Eq. (51.1)]{MR2270049}.

Hence, the norm square value of the monic type of the $q$-polynomials (\ref{eq:37}) takes the form
\begin{equation*}
\int_{-1}^{1} \bar{\phi}_{n}^{2}(x; \alpha,\beta \vert q) W_{1}^{*}(x;\alpha,\beta \vert q) d_{q}x= d_{n,\alpha,\beta}^{2} 
\int_{-1}^{1} W_{1}^{*}(x;\alpha,\beta \vert q) d_{q}x,
\end{equation*}
where
\begin{multline*}
d_{2m,\alpha,\beta}^{2}=\frac{\left(q^2;q^2\right)_m \left(q \left(\alpha  q
   \left(q^2-1\right)+1\right);q^2\right)_m}{\left(\frac{(\alpha +\beta +1) q
   \left(q^2-1\right)+1}{q^3};q^4\right)_{m+1}
   \left(\frac{(\alpha +\beta +1) q
   \left(q^2-1\right)+1}{q};q^4\right){}_m}  \\Ê\times
\frac{(q-1) q^{m (2 m-1)-2} (q (q+1) (\alpha +\beta )-1) (q
   (q+1) (\alpha +\beta +1)-1) \left(\alpha  q
   \left(q^2-1\right)+1\right)^{m+1}}{(q+1) \left(-q (\alpha
   +\beta +1)+\alpha  q^3+1\right)} \\
    \times
   \frac{\left(\frac{(\alpha +\beta +1) q
   \left(q^2-1\right)+1}{q};q^2\right)_m \left(\frac{(\alpha
   +\beta +1) q \left(q^2-1\right)+1}{q^2 \left(\alpha  q
   \left(q^2-1\right)+1\right)};q^2\right)_{m+1}}{\left(\frac
   {(\alpha +\beta +1) q
   \left(q^2-1\right)+1}{q};q^4\right)_{m+1} \left(q
   \left((\alpha +\beta +1) q
   \left(q^2-1\right)+1\right);q^4\right)_m},
\end{multline*}
and
\begin{multline*}
d_{2m+1,\alpha,\beta}^{2}=\frac{\left(q^2;q^2\right)_m \left(q \left(\alpha  q
   \left(q^2-1\right)+1\right);q^2\right)_{m+1}}{\left(q
   \left((\alpha +\beta +1) q
   \left(q^2-1\right)+1\right);q^4\right){}_{m+1}}\\ \times \frac{(q-1) q^{2 m^2+m-2} (q (q+1) (\alpha +\beta )-1) (q (q+1)
   (\alpha +\beta +1)-1) \left(\alpha  q
   \left(q^2-1\right)+1\right)^{m+1}}{(q+1) \left(-q (\alpha
   +\beta +1)+\alpha  q^3+1\right)} \\
   \times 
   \frac{\left(\frac{(\alpha +\beta +1) q
   \left(q^2-1\right)+1}{q};q^2\right)_{m+1}
   \left(\frac{(\alpha +\beta +1) q \left(q^2-1\right)+1}{q^2
   \left(\alpha  q
   \left(q^2-1\right)+1\right)};q^2\right)_{m+1}}{\left(\frac
   {(\alpha +\beta +1) q
   \left(q^2-1\right)+1}{q^3};q^4\right)_{m+1}
   \left(\left(\frac{(\alpha +\beta +1) q
   \left(q^2-1\right)+1}{q};q^4\right)_{m+1}\right)^2}.
\end{multline*}

\subsubsection{Fifth kind $q$-Chebyshev polynomials}

The $q$-difference equation
\begin{multline*}
x^{2}(1-x^{2}) D_{q} D_{q^{-1}} \phi_{n}(x;q)
+qx \left(-(q^{2}+q+1) x^2+q+1 \right) D_{q} \phi_{n}(x;q) 
\\+\left(\qnum{n} \left( q (q^{2}+q+1)-\qnum{1-n} \right) x^{2} -q (1+q) \sigma_{n} \right) \phi_{n}(x;q) =0,
\end{multline*}
is a special case of (\ref{eq:gup}) for $\alpha=1$ and $\beta=\qnum{3}/\qnum{2}-2$ with the polynomial solution
\begin{multline}\label{eq:icod}
\phi_{n}(x;1, \frac{\qnum{3}}{\qnum{2}}-2 \vert q)=\qphin{m}{-q \left(q^2+q+1\right)}{q (q+1)}{-1}{1}{x} \\
=x^{\sigma_{n}} \, \qhyper{2}{1}
{q^{\sigma_{n}-n},q^{n+\sigma_{n}-1} \left(q^4-q+1\right)}
{q^{2 \sigma_{n}+1} \left(q^3-q+1\right)}
{q^{2}}
{q^2 x^2},
\end{multline}
satisfying the orthogonality relation of monic type
\begin{multline*}
\int_{-1}^{1} W_{1}^{*}(x;1,\frac{\qnum{3}}{\qnum{2}}-2 \vert q) \bar{\phi}_{n}(x; 1,\frac{\qnum{3}}{\qnum{2}}-2 \vert q) \bar{\phi}_{m}(x; 1,\frac{\qnum{3}}{\qnum{2}}-2 \vert q) d_{q}x \\= 
d_{n,1,\frac{\qnum{3}}{\qnum{2}}-2}^{2}  \left( \int_{-1}^{1} W_{1}^{*}(x;1,\frac{\qnum{3}}{\qnum{2}}-2 \vert q) d_{q}x
\right) \delta_{n,m} ,
\end{multline*}
in which  
\begin{equation*}
W_{1}^{*}(x;1,\frac{\qnum{3}}{\qnum{2}}-2 \vert q)=\frac{
\qf{q^2 x^2}{q^{2}}{\infty}
   \left(q^3-q+1\right)^{\frac{\log \left(x^2\right)}{2 \log(q)}}}
 { \qf{\frac{q^{4}-q+1}{q^3-q+1}x^2}{q^{2}}{\infty} }=W_{1}^{*}(-x;1,\frac{\qnum{3}}{\qnum{2}}-2 \vert q),
\end{equation*}
and
\begin{equation*}
\lim_{q \uparrow 1} W_{1}^{*}(x;1,\frac{\qnum{3}}{\qnum{2}}-2 \vert q)=\frac{x^{2}}{\sqrt{1-x^{2}}},
\end{equation*}
i.e. the weight function of the fifth kind Chebyshev polynomials \cite{MR2270049}.

\subsubsection{Sixth kind $q$-Chebyshev polynomials}

The $q$-difference equation
\begin{multline}\label{eq:368}
x^{2}(1-x^2) D_{q} D_{q^{-1}} \phi_{n}(x;q)
+qx\left(  -\qnum{5} x^{2}+q+1 \right) D_{q} \phi_{n}(x;q) 
\\+\left(
\qnum{n} (q \qnum{5}- \qnum{1-n} ) x^{2} - q (q+1) \sigma_{n}
 \right) \phi_{n}(x;q) =0,
\end{multline}
is a special case of (\ref{eq:gup}) for $\alpha=1$ and $\beta=\qnum{5}/\qnum{2}-2$ with the polynomial solution 
\begin{multline*}
\phi_{n}(x;1,\frac{\qnum{5}}{\qnum{2}}-2 \vert q)=\qphin{m}{-q \qnum{5}}{q (q+1)}{-1}{1}{x}\\
=
x^{\sigma_{n}} 
\qhyper{2}{1}
{q^{\sigma_{n}-n},q^{n+\sigma_{n}-1} \left(q^6-q+1\right)}
{q^{2 \sigma_{n}+1}\left(q^3-q+1\right)}
{q^2}
{q^2 x^2},
\end{multline*}
satisfying the orthogonality relation of monic type
\begin{multline*}
\int_{-1}^{1} W_{1}^{*}(x;1, \frac{\qnum{5}}{\qnum{2}}-2 \vert q) \bar{\phi}_{n}(x;1, \frac{\qnum{5}}{\qnum{2}}-2 \vert q) \bar{\phi}_{m}(x;1, \frac{\qnum{5}}{\qnum{2}}-2 \vert q) d_{q}x \\
=d_{n,1,\frac{\qnum{5}}{\qnum{2}}-2}^{2} \left( \int_{-1}^{1} W_{1}^{*}(x;1, \frac{\qnum{5}}{\qnum{2}}-2 \vert q) d_{q}x,
\right) \delta_{n,m},
\end{multline*}
in which 
\begin{equation*}
W_{1}^{*}(x;1, \frac{\qnum{5}}{\qnum{2}}-2  \vert q)=\frac{\left(q^2 x^2;q^2\right){}_{\infty }
   \left(q^3-q+1\right)^{\frac{\log
   \left(x^2\right)}{2 \log
   (q)}}}{ \,\left(\frac{q^6-q+1
  }{q^3-q+1}  x^2;q^2\right){}_{\infty }}=W_{1}^{*}(-x;1,  \frac{\qnum{5}}{\qnum{2}}-2 \vert q),
\end{equation*}
and
\begin{equation*}
\lim_{q \uparrow 1} W_{1}^{*}(x;1, \frac{\qnum{5}}{\qnum{2}}-2  \vert q)=x^{2}\sqrt{1-x^{2}},
\end{equation*}
i.e. the weight function of the sixth kind Chebyshev polynomials \cite{MR2270049}.

\subsection{A generalization of $q$-Hermite polynomials}
Let us consider the $q$-dif\-ference equation
\begin{multline*}
x^{2}\left( \left(1-q^2\right) x^2-1 \right)D_{q} D_{q^{-1}} \phi_{n}(x;q)
+(q+1) x \left(x^2+p\right) D_{q} \phi_{n}(x;q) 
\\+\left(
q \qnum{-n} x^{2} -\sigma_{n} p  
 \right) \phi_{n}(x;q) =0,
\end{multline*}
as a special case of (\ref{eq:qde}) with the polynomial solution
\begin{multline}\label{eq:64n}
\phi_{n}(x;p \vert q)=\qphin{m}{1+q}{p(1+q)}{1-q^{2}}{-1}{x}\\
=x^{\sigma_{n}} \qhyper{2}{1}
{q^{\sigma_{n}-n},0}
{q^{2 \sigma_{n}+1} \left(-p  q^2+p +1\right)}
{q^2}{q^2 \left(1-q^2\right) x^2}.
\end{multline}

The polynomial sequence (\ref{eq:64n}) satisfies an orthogonality relation as
\begin{equation*}
\int_{-\alpha}^{\alpha} W_{2}^{*}(x;p \vert q) \phi_{n}(x;p \vert q) \phi_{m}(x;p;q) d_{q}x = 
\left( \int_{-\alpha}^{\alpha} W_{2}^{*}(x;p \vert q) \phi_{n}^{2}(x;p \vert q) d_{q}x \right) \delta_{n,m},
\end{equation*}
in which $\alpha=1/\sqrt{1-q^{2}}$, $W_{2}^{*}(x;p \vert q)=x^{2}W_{2}(x;p \vert q)$ is the main weight function and $W_{2}(x;p\vert q)$ satisfies the equation
\begin{equation}\label{eq:4460}
\frac{W_{2}(qx;p \vert q)}{W_{2}(x;p \vert q)}=\frac{-p  q^2+p +1}{q^2+\left(q^2-1\right) q^4 x^2}.
\end{equation}

Up to a periodic function, a solution of the equation (\ref{eq:4460}) is in the form
\begin{equation*}
W_{2}(x;p\vert q)=\frac{\left(p  \left(1-q^2\right)+1\right)^{\frac{\log
   \left(x^2\right)}{2 \log (q)}} 
   \qf{q^2 \left(1-q^2\right)
   x^2}{q^{2}}{\infty} }
{x^{2}}=W_{2}(-x;p \vert q),
\end{equation*}
where
\begin{equation*}
\lim_{q \uparrow 1} W_{2}^{*}(x;p \vert q)=\lim_{q \uparrow 1} x^{2} W_{2}(x;p \vert q)=x^{-2p}e^{-x^{2}},
\end{equation*}
appearing the weight function of generalized Hermite polynomials  \cite[Eq. (80)]{MR2421844}.

The monic type of polynomials (\ref{eq:64n}) satisfies a three term recurrence relation of type (\ref{eq:30}) with
\begin{equation*}
C_{2m,q}=-\frac{\left(p  \left(q^2-1\right)-1\right) q^{2 m-1}
   \left(q^{2 m}-1\right)}{q^2-1},
\end{equation*}
and
\begin{equation*}
C_{2m+1,q}=\frac{\left(-p  q^2+p +1\right) q^{4 m+1}-q^{2
   m}}{q^2-1},
\end{equation*}
such that
\begin{equation*}
\lim_{q \uparrow 1} C_{n,q}=\frac{1}{2} \left(p  \left((-1)^n-1\right)+n\right),
\end{equation*}
exactly coincides with \cite[Eq. (79.1)]{MR2270049}.

Consequently, the norm square value corresponding to the the monic type of polynomials (\ref{eq:64n}) takes the form
\begin{equation*}
\int_{-\alpha}^{\alpha} \bar{\phi}_{n}^{2}(x;p \vert q)W_{2}^{*}(x;p \vert q) d_{q}x
= d_{n,p}^{2}  \int_{-\alpha}^{\alpha} W_{2}^{*}(x;p \vert q)\, d_{q}x,
\end{equation*}
where
\begin{equation*}
d_{2m,p}^{2}=\frac{1}{2} q^{m (2 m-1)}  
\frac{\left(-p q^2+p +1\right)^m}{\left(q^2-1\right)^{2 m}} (-1;q)_{m+1} (q;q)_m \left(-p  q^3+p
    q+q;q^2\right)_m,
\end{equation*}
and
\begin{equation*}
d_{2m+1,p}^{2}=\frac{1}{2} q^{m (2 m+1)} 
  \frac{ \left(-p  q^2+p +1\right)^m}{\left(q^2-1\right)^{2 m+1}}  (-1;q)_{m+1} (q;q)_m
   \left(-p  q^3+p  q+q;q^2\right)_{m+1}.
\end{equation*}

Here we point out that if $p=0$, then the weight function of discrete $q$-Hermite I polynomials appears as
\begin{equation*}
W_{2}^{*}(x;0 \vert q)=\frac{1}{\left(\left(1-q^2\right) x^2;q^2\right){}_{\infty}},
\end{equation*}
and moreover
\begin{equation*}
\phi_{n}(x;0 \vert q)=k_{n} h_{n}(x\sqrt{1-q^{2}} \vert q),
\end{equation*}
in which $h_{n}(x \vert q)$ denotes the discrete $q$-Hermite I polynomials \cite[Eq. (14.28.1)]{MR2656096}, and $k_{n}$ is a normalizing constant. See also \cite{MR1860758,MR2027790} for further generalizations of $q$-Hermite polynomials.


\begin{thebibliography}{10}

\bibitem{MR2027790}
R.~{{\'A}}lvarez Nodarse, M.~K. Atakishiyeva, and N.~M. Atakishiyev.
\newblock A {$q$}-extension of the generalized {H}ermite polynomials with the
  continuous orthogonality property on {$\mathbb R$}.
\newblock {\em Int. J. Pure Appl. Math.}, 10(3):335--347, 2004.

\bibitem{AREAMASJED}
I.~Area and M.~Masjed-Jamei.
\newblock A symmetric generalization of {S}turm-{L}iouville problems in
  $q$-difference spaces.
\newblock submitted.

\bibitem{MR1810939}
G.~B. Arfken and H.~J. Weber.
\newblock {\em Mathematical methods for physicists}.
\newblock Harcourt/Academic Press, Burlington, MA, fifth edition, 2001.

\bibitem{MR1860758}
C. Berg and A. Ruffing.
\newblock Generalized {$q$}-{H}ermite polynomials.
\newblock {\em Comm. Math. Phys.}, 223(1):29--46, 2001.

\bibitem{MR0105586}
W.~E. Byerly.
\newblock {\em An elementary treatise on {F}ourier's series and spherical,
  cylindrical, and ellipsoidal harmonics, with applications to problems in
  mathematical physics}.
\newblock Dover Publications Inc., New York, 1959.

\bibitem{MR0481884}
T.~S. Chihara.
\newblock {\em An introduction to orthogonal polynomials}.
\newblock Gordon and Breach Science Publishers, New York, 1978.
\newblock Mathematics and its Applications, Vol. 13.

\bibitem{MR2128719}
G. Gasper and M. Rahman.
\newblock {\em Basic hypergeometric series}, volume~96 of {\em Encyclopedia of
  Mathematics and its Applications}.
\newblock Cambridge University Press, Cambridge, second edition, 2004.

\bibitem{MR2191786}
M. E.~H. Ismail.
\newblock {\em Classical and quantum orthogonal polynomials in one variable},
  volume~98 of {\em Encyclopedia of Mathematics and its Applications}.
\newblock Cambridge University Press, Cambridge, 2005.

\bibitem{JACKSON1910}
F.H. Jackson.
\newblock On $q$-definite integrals.
\newblock {\em Q. J. Pure Appl. Math.}, 41:193--203, 1910.

\bibitem{0817.39004}
A. Jirari.
\newblock {Second-order Sturm-Liouville difference equations and orthogonal polynomials.}
\newblock {\em Mem. Am. Math. Soc.}, 542:138 p., 1995.

\bibitem{MR2656096}
R. Koekoek, P-~A. Lesky, and R.~F. Swarttouw.
\newblock {\em Hypergeometric orthogonal polynomials and their {$q$}-analogues}.
\newblock Springer Monographs in Mathematics. Springer-Verlag, Berlin, 2010.

\bibitem{MR2270049}
M. Masjed-Jamei.
\newblock A basic class of symmetric orthogonal polynomials using the extended {S}turm-{L}iouville theorem for symmetric functions.
\newblock {\em J. Math. Anal. Appl.}, 325(2):753--775, 2007.

\bibitem{MR2374588}
M. Masjed-Jamei.
\newblock A generalization of classical symmetric orthogonal functions using a symmetric generalization of {S}turm-{L}iouville problems.
\newblock {\em Integral Transforms Spec. Funct.}, 18(11-12):871--883, 2007.

\bibitem{MR2421844}
M. Masjed-Jamei.
\newblock A basic class of symmetric orthogonal functions using the extended {S}turm-{L}iouville theorem for symmetric functions.
\newblock {\em J. Comput. Appl. Math.}, 216(1):128--143, 2008.

\bibitem{MR2601301}
M. Masjed-Jamei.
\newblock A basic class of symmetric orthogonal functions with six free parameters.
\newblock {\em J. Comput. Appl. Math.}, 234(1):283--296, 2010.

\bibitem{MASJEDAREA}
M. Masjed-Jamei and I. Area.
\newblock A symmetric generalization of {S}turm-{L}iouville problems in discrete spaces.
\newblock {\em Journal of Difference Equations and Applications}, 2013.

\bibitem{MASJEDAREA2}
M. Masjed-Jamei and I. Area.
\newblock A basic class of symmetric orthogonal polynomials of a discrete variable.
\newblock {\em J. Math. Anal. Appl.}, 399(291--305), 2013.

\bibitem{MR2467682}
M. Masjed-Jamei and M. Dehghan.
\newblock A generalization of {F}ourier trigonometric series.
\newblock {\em Comput. Math. Appl.}, 56(11):2941--2947, 2008.

\bibitem{MR2743534}
M. Masjed-Jamei and W. Koepf.
\newblock On incomplete symmetric orthogonal polynomials of {J}acobi type.
\newblock {\em Integral Transforms Spec. Funct.}, 21(9-10):655--662, 2010.

\bibitem{1220.33011}
M. Masjed-Jamei and W. Koepf.
\newblock {On incomplete symmetric orthogonal polynomials of Laguerre type.}
\newblock {\em Appl. Anal.}, 90(3-4):769--775, 2011.

\bibitem{MR1149380}
A.~F. Nikiforov, S.~K. Suslov, and V.~B. Uvarov.
\newblock {\em Classical orthogonal polynomials of a discrete variable}.
\newblock Springer Series in Computational Physics. Springer-Verlag, Berlin, 1991.

\bibitem{MR922041}
A.~F. Nikiforov and V.~B. Uvarov.
\newblock {\em Special functions of mathematical physics. A unified introduction with applications}.
\newblock Birkh{\"a}user Verlag, Basel, 1988.

\bibitem{Thomae1969}
J.~Thomae.
\newblock Beitrage zur theorie der durch die heinesche reihe.
\newblock {\em J. reine angew. Math}, 70:258--281, 1869.

\end{thebibliography}

\end{document}